\documentclass[final,5p,twocolumn]{elsarticle}

\usepackage{graphicx}          
                               
\usepackage{graphbox}          

\usepackage[dvips]{epsfig}    

\usepackage{xcolor}

\usepackage{amssymb}
\usepackage{amsthm}

\usepackage{amsmath}          

\journal{XXX}

\newtheorem{theorem}{Theorem}[section]
\newtheorem{assumption}[theorem]{Assumption}

\begin{document}

\begin{frontmatter}

\title{On Cauchy problem and stability of inversion-free feedforward control of piecewise monotonic Krasnosel\'skii-Pokrovskii hysteresis} 

\author[First]{Jana Kopfov\'a}
\ead{jana.kopfova@math.slu.cz}
\author[Second]{Michael Ruderman}
\ead{michael.ruderman@uia.no}

\address[First]{Mathematical Institute of the Silesian University, Czech Republic}
\address[Second]{Department of Engineering Sciences of the University of Agder, Norway}

\begin{abstract}                          
We consider the non-homogeneous first-order differential equation with hysteresis described by the Krasnosel\'skii-Pokrovskii rate-independent hysteresis operator. Existence and  uniqueness of solutions as well as  the boundedness of solution in response to a bounded input are proved. The global stability of the equation is also investigated. Periodic solutions and their stability are studied in addition. The differential equation under analysis constitutes the so-called inversion-free feedforward control, which was proposed for mitigating arbitrary rate-independent hysteresis effects in the actuated systems. The experimentally identified non-smooth and non-strictly monotonic hysteresis of a magnetic shape memory alloy (MSMA) actuator serves as the case study. The performed analysis is settled in a series of theorems which are illustrated by  numerical examples. 
\end{abstract}

\begin{keyword}
Hysteresis model \sep stability analysis \sep feed-forward control \sep play operator \sep hysteresis compensation
\end{keyword}

\end{frontmatter}

\newtheorem{thm}{Theorem}
\newtheorem{lem}[thm]{Lemma}
\newtheorem{clr}{Corollary}
\newdefinition{rmk}{Remark}
\newproof{pf}{Proof}

\newcommand\red{\color{red}}
\newcommand\blue{\color{blue}}

\color{black}

\section{Introduction}
\label{INTRO}

Hysteresis phenomena arise in a wide range of physical systems, including ferromagnetic materials, piezoelectric actuators, and multi-stable functional materials such as, for example, magnetic shape memory alloys (MSMA). In many of these applications, hysteresis severely limits the achievable precision of control strategies, and its compensation is therefore an essential component in high-performance system design.

Among various analytical descriptions of hysteresis, the Krasnosel'skii-Pokrovskii (KP) operator \cite{1} represents a widely used and experimentally validated model class,  suitable for systems exhibiting piecewise monotone, rate-independent hysteresis loops.

This work is motivated by the inversion-free feedforward control framework proposed in \cite{2}, as illustrated in Figure \ref{fig:1}.
\begin{figure}[!ht]
\centering
\includegraphics[scale=0.6]{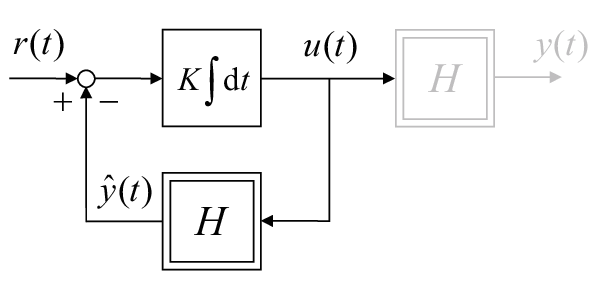}
\caption{Inversion-free feed-forward hysteresis control.}
\label{fig:1}       
\end{figure} 
Instead of explicitly computing the inverse hysteresis map, the approach uses a model feedback loop to implicitly generate the compensating signal $u$. The underlying dynamical system is described by the differential equation
\begin{equation}\label{eq:main}
\frac{1}{K}\frac{du(t)}{dt} + H(u(t)) = r(t), \qquad t \ge 0,
\end{equation}
with initial conditions \(u(0)=u_0\), $H(u)(0) = w_0,$ design parameter \(K>0\), and a continuous input \(r(t)\). For sufficiently large gain \(K\), the solution \(u(t)\) approximates the action of the inverse hysteresis operator \(H^{-1}\), and thus acts as a feedforward compensation signal for a hysteretic part, see the gray-shadowed right-hand side of Fig.~\ref{fig:1}. The performance of the scheme is reflected in the tracking error norm \( |r(t) - \hat{y}(t)| \), with \( \hat{y}(t) = H(u(t)) \).

While various control schemes are applicable and have been studied for MSMA in the last two decades, e.g. \cite{5}, \cite{7}, a pre-compensation of a large input hysteresis remains one of the main control issues. An example of the measured input-output response, \cite{6}, of a prototypic MSMA actuator, \cite{5}, under quasi-static excitation conditions is shown in Figure \ref{fig:2} in comparison with the identified hysteresis model which assumes three elementary operators, each of them of a KP-type. Note that here the quasi-static conditions mean that the applied actuator excitation current had a form of a low linear slope, i.e. with a sufficiently low $|du/dt|$ value. Under such conditions, an input-output hysteresis response of the actuator can be considered without any other actuator dynamics, like those due to inertial, damping, or stiffness elements. We then assume a rate-independent hysteresis map $y = H(u)$, while feedforward compensation scheme as in Figure \ref{fig:1} is the main objective, reflected in \eqref{eq:main}.
\begin{figure}[!ht]
\centering
\includegraphics[scale=0.6]{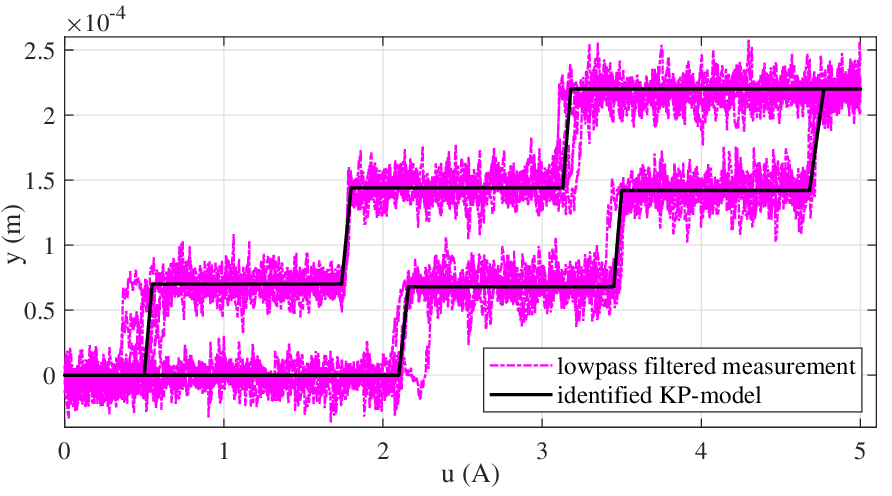}
\caption{Measured (and lowpass filtered) MSMA actuator response under quasi-static operation conditions versus the identified KP-type hysteresis model with three operator elements.}
\label{fig:2}       
\end{figure} 

A central problem  addressed in this paper is the analysis of the well-posedness for \eqref{eq:main}, particularly the existence and uniqueness of solutions, 
existence of periodic solutions, and their long term behavior, i.e. the  stability of steady state solutions and  periodic solutions. The hysteresis nonlinearity, and also  its non-strict monotonicity are features that complicate the analyses.


In the following, we extend the inversion-free hysteresis compensation framework of \cite{2,6} by developing a rigorous analysis for a broad class of hysteresis maps. More precisely:
\begin{itemize}
    \item We introduce the class of \emph{generalized play operators}, which includes the classical KP-type hysteresis operators as a special case.
    \item Under physically reasonable assumptions, we establish \emph{existence} and \emph{uniqueness} of solutions to the nonlinear differential equation \eqref{eq:main} with hysteresis.
    \item We derive \emph{a priori bounds} on the solution and prove stability properties of the associated dynamical system.
    \item We investigate the existence of \emph{periodic solutions} to \eqref{eq:main} under periodic forcing \(r(t)\), and we provide sufficient conditions for their stability.
    \item Numerical simulations illustrate the theoretical findings and demonstrate also the effectiveness of inversion-free feedforward compensation for KP-type hysteresis.
\end{itemize}

\noindent The results contribute to the mathematical foundations of hysteresis compensation and provide practically relevant tools for control applications involving complex multi-stable materials and actuators.

The mathematical theory of hysteresis has been extensively developed over the last decades, beginning with the classical monographs of Krasnosel'skii and Pokrovskii \cite{1} and continuing through modern analytical frameworks such as those presented by Brokate and Sprekels \cite{BrokateSprekels} and Visintin \cite{8}. These works established the fundamental properties of rate-independent operators, including play and stop operators, Preisach-type superposition models, and their structural assumptions such as monotonicity, Lipschitz continuity, or parameterized dissipation potentials. Also  a significant portion of the existing literature focuses on invertibility or strict monotonicity of hysteresis operators, as these conditions ensure well-posedness of differential equations with hysteresis terms and enable the construction of inverse operators central to compensation-based control strategies, cf. \cite{Mayergoyz}. However, KP-type hysteresis operators arising in physical applications often feature flat regions and non-invertible branches, and therefore violate these classical assumptions. The non-strict monotonicity of the KP operator  poses analytical challenges not fully covered by existing results on differential equations with hysteresis. The present work contributes to this line of research by establishing rigorous existence, uniqueness, boundedness, and stability properties of solutions to \eqref{eq:main}, under assumptions directly motivated by physical applications. However, the hysteresis models in this work are considered under more general assumptions as is reasonable to assume in physical applications. We further provide conditions guaranteeing the existence and stability of periodic solutions, thereby addressing aspects that remain only partially explored in the current mathematical literature. Numerical examples reconfirm theoretical results formulated in the series of theorems.

\section{Problem statement}
\label{PROBLEM}

We consider the class of dynamical systems with hysteresis described by the differential equation  \eqref{eq:main}
with the initial condition 
$u(0)= u_0$ and the independent (time) variable $t\in[0,\infty)$. Here $H(\cdot)$ represents a generalized play operator, described in detail in the next section. The KP-type hysteresis operators coming from applications constitute a special case of these operators. The right hand side $r(t)$ is assumed to be Lipschitz continuous. The constant $K > 0$ is the design parameter.

\color{black}

\section{Generalized play operator}

Let
$
 {\gamma  }_{l},  {\gamma  }_{r}  : \mathbb{R} \rightarrow
\mathbb{R}, \quad  \text{ with } \quad
{\gamma }_{r } \leq {\gamma }_{l }
$
be continuous non-decreasing functions. Now, given ${w}_{0}\in \mathbb{R}$, we construct the hysteresis operator $H (\cdot, {w}_{0})$ as follows. Let $u$ be any continuous,
piecewise linear function on ${\mathbb{R}}^{+}$ such that $u$ is linear on
$[{t}_{i - 1}, {t}_{i}] $ for $i = 1,2,...$
We then define $w :=H  (u, {w}_{0}):{\mathbb{R}}^{+}\rightarrow \mathbb{R}$ by
\begin{equation}\label{rv}
w(t) :=
\begin{cases}
   \min   \{{\gamma  }_{l}(u(0)),   \max  \{{\gamma  }_{r}(u(0)),
{w}_{0}\}\}&\text{ if }
t = 0,\\
   \min   \{{\gamma  }_{l}(u(t)),   \max  \{{\gamma  }_{r}(u(t)),
w({t}_{i - 1})\}\}&
\text{ if } t \in I,
\end{cases}
\end{equation}
where $I = ({t}_{i - 1},{t}_{i}], i = 1,2,\ldots$. 

We also assume that 
${\gamma }_{r}(u(0)) \leq {w}_{0}
\leq {\gamma  }_{l}(u(0)) $, i.e. $w(0) = {w}_{0}.$ 

\noindent As proved in \cite[Section~III.2]{8}, for  any continuous
piecewise  linear  functions  ${u  }_{1  }  $,  ${u  }_{2  } $ on
${\mathbb{R}  }^{+} $,  with the  notation ${\epsilon  }_{k }  :=
H ({u }_{k }, {w }_{0k }) $, $k=1,2 $, we have the following
inequality:
\begin{align}\nonumber
\max_{[{t }_{1 }, {t }_{2 }]}  |{\epsilon }_{1 } - {\epsilon }_{2
} | \leq \\ \nonumber \max\left\{ | {\epsilon }_{1  } ({t }_{1 })  - {\epsilon
}_{2 } ({t }_{1}) |, {m  }_{M }\left( \max_{ [{t }_{1 }, {t }_{2
} ]} |{u }_{1 } - {u }_{2 }| \right) \right\} \\ \label{qw} 
\forall [{t }_{1 }, {t }_{2 }] \subset [0,T], T \in {\mathbb{R} }^{+},
\end{align}
where for any  continuous function  $f:   \mathbb{R}  \rightarrow
\mathbb{R}$ and any  constant $ M >0 $,  ${|f| }_{M}(h) $ denotes
its local modulus of continuity:
\begin{align}
{|f| }_{M } (h) := \sup \bigl\{ |f({y }_{1 }) - f({y }_{2 } )| \; : \\ \nonumber  {y }_{1
}, {y  }_{2 } \in  [-M , M],  |{y }_{1 }  - {y }_{2  }| \leq h \bigr \}
\qquad  \forall h > 0 ,
\end{align}
\begin{equation}
{m }_{M } (h) := \max \{  {|{\gamma }_{l }| }_{M } (h), {|{\gamma
}_{r }| }_{M }(h) \} \qquad \forall h, M > 0,
\end{equation}
and
\begin{equation}
M := \max \{ | {u }_{k }(t)| : t \in [0,T]   , k = 1,2      \}.
\end{equation}

Hence $H (\cdot , {w}_{0})$ has an unique continuous extension, denoted by
$H  (\cdot ,  {w}_{0})$ again,  to an  operator
\begin{equation}
H:C({\mathbb{R}}^{+}) \times \mathbb{R} \rightarrow C({
\mathbb{R}}^{+ }).
\end{equation}
This operator is called a generalized play.%

The inequality (\ref{qw}) holds also for this extended operator,
which is then uniformly continuous on bounded sets. If ${\gamma
}_{l  }  $,  ${\gamma  }_{r  }  $  are Lipschitz continuous, then
$H $ is also Lipschitz continuous.

\section{Main Results}
\label{Model}

We  prove an uniqueness  theorem for the Cauchy problem with $H(u(t))$  a
generalized play operator. 
We will assume in the whole section that the Assumption \ref{H} is always satisfied:

\begin{assumption}\label{H}
$H(u,{w}_{0})(t) $ is a generalized play operator  with hysteresis boundary curves  ${\gamma}_{l}$
and ${\gamma}_{r}$ which are non-decreasing and Lipschitz continuous, $r(t)$ is Lipschitz continuous.\\
\end{assumption}

{\emph{Remark:} As a consequence of this Assumption we can assume for the rest of the paper that the considered hysteresis operator $H(u(t))$ is Lipschitz continuous.

\begin{theorem}
Suppose that Assumption \ref{H} holds. 
Then the solution of the Cauchy problem \eqref{eq:main} for any $K>0$
with initial conditions $u(0) = u_0,$ $H(u)(0) = w_0,$
exists and is unique.
\end{theorem}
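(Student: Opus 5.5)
We rewrite the Cauchy problem \eqref{eq:main} as the fixed-point equation
\begin{equation*}
u(t) = u_0 + K\int_0^t \bigl( r(s) - H(u,w_0)(s)\bigr)\,ds =: (\mathcal{T}u)(t),
\end{equation*}
posed in $C([0,T])$ for an arbitrary $T>0$. A continuous solution of this integral equation is automatically Lipschitz, hence absolutely continuous, and it satisfies \eqref{eq:main} almost everywhere. Since $H(u,w_0)$ and $r$ are continuous, it is in fact a classical $C^1$ solution. The initial conditions hold because $u(0)=u_0$ and, by the compatibility assumption $\gamma_r(u_0)\le w_0\le\gamma_l(u_0)$, also $H(u,w_0)(0)=w_0$.

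The key tool is the Lipschitz estimate for the generalized play that follows from \eqref{qw}. Take two inputs $u_1,u_2$ with the same initial state $w_0$. Their outputs then agree at $t_1=0$, and the modulus $m_M(h)$ reduces to $Lh$, where $L$ is the larger of the Lipschitz constants of $\gamma_l$ and $\gamma_r$. This bound does not depend on $M$, since the curves are globally Lipschitz by Assumption \ref{H}. Applying \eqref{qw} on $[0,t]$ gives
\begin{equation*}
\max_{[0,t]}|H(u_1,w_0)-H(u_2,w_0)| \le L\max_{[0,t]}|u_1-u_2| \qquad \forall t\in[0,T].
\end{equation*}
The left-hand side at time $t$ involves only the history of the inputs on $[0,t]$, which reflects the causality of the operator. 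Consequently
\begin{equation*}
|(\mathcal{T}u_1)(t)-(\mathcal{T}u_2)(t)| \le KL\int_0^t \max_{[0,s]}|u_1-u_2|\,ds .
\end{equation*}

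To get a contraction on the whole interval $[0,T]$ at once, we use the weighted norm $\|v\|_\lambda=\sup_{[0,T]} e^{-\lambda t}\max_{[0,t]}|v|$ with $\lambda>KL$. Under this norm $\mathcal{T}$ has Lipschitz constant $KL/\lambda<1$. Banach's fixed point theorem then gives a unique solution in $C([0,T])$.

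Since $T$ is arbitrary and the solutions on nested intervals coincide by uniqueness, they patch together into a unique global solution on $[0,\infty)$. The same estimate combined with Gronwall's lemma also shows directly that any two solutions coincide. No finite-time blow-up can occur, because $H(u)$ grows at most linearly in $\max|u|$ (it is bounded by $\gamma_{l}$ and $\gamma_r$ evaluated along $u$).

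The main obstacle is justifying the operator estimate. One has to check carefully that \eqref{qw}, which is first proved for piecewise linear inputs, remains valid for the continuous extension of $H$. One also has to check that the Lipschitz constant is uniform, with no dependence on $M$. Both points follow from the paper's remark that the extended operator satisfies \eqref{qw} and is Lipschitz whenever $\gamma_l$ and $\gamma_r$ are. The non-strict monotonicity of $H$, which the paper emphasizes as a difficulty, plays no role here: only Lipschitz continuity and causality are used. Monotonicity becomes relevant only for the later boundedness and stability results.
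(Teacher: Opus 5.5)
Your proposal is correct and takes the same route as the paper, whose proof just invokes standard (Picard--Lindel\"of type) ODE theory based on the Lipschitz continuity of $H$. You supply the details the paper leaves implicit: causality of $H$, the $M$-independent Lipschitz bound from \eqref{qw}, and a weighted-norm contraction on $[0,T]$. One small fix is needed for the outputs to agree at $t=0$: either work in the closed, $\mathcal{T}$-invariant set $\{u\in C([0,T]):u(0)=u_0\}$, or note that $|H(u_1,w_0)(0)-H(u_2,w_0)(0)|\le L|u_1(0)-u_2(0)|$ in general, and either way your estimate is unchanged.
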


\begin{proof}
The proof is a standard application of well-known results from the ODE theory, see e.g. \cite{3} for more details. The Lipschitz continuity of the hysteresis operator $H(u(t))$ is the essential property from which the result follows.
\end{proof}

The interesting question about the long time behaviour of our ODE  \eqref{eq:main}, we are answering in a few steps. We prove first the boundedness of the solution. 

\begin{theorem}
Suppose that Assumption \ref{H}  holds.
Assume, moreover, that the hysteresis term $H(u)$ as well as the function  $r(t)$
are nonnegative and bounded, i.e. 
that  there exist  constants $H>0 $ and $R_{\infty}$ such that for any $t \geq 0$, $0\leq H(u(t))\leq H  $ and moreover $0<r(t)\leq R_{\infty} <H. $\\
Then the solution $u(t)$ of the Cauchy problem  \eqref{eq:main}
with initial conditions $u(0) = u_0$, $H(u)(0) = w_0$,
is bounded for any $t \geq 0.$ This holds for any $K>0$.
\end{theorem}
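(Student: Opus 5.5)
The plan is to read the equation as $u'(t)=K\bigl(r(t)-H(u)(t)\bigr)$ and to use the sandwich property of the generalized play, which holds by construction \eqref{rv} and extends by density to continuous inputs:
\begin{equation*}
\gamma_r(u(t))\le H(u)(t)\le \gamma_l(u(t)) \qquad \text{for all } t\ge 0 .
\end{equation*}
By the existence and uniqueness theorem above, $u$ is $C^1$, since $H(u)$ and $r$ are continuous. Boundedness will come from showing that $u$ cannot cross certain upper and lower thresholds, because $u'$ has a fixed sign beyond them. Note that the crude estimate $-KH\le u'\le KR_\infty$ only gives linear growth, so it is not enough; the boundary curves must be used.

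First I make precise how the boundedness hypothesis $0\le H\le H$ is used. I read it, as for the KP-type saturating hysteresis of the application, as a statement about the range of the boundary curves. That is, the output saturates at the level $H$ for large inputs and at $0$ for small inputs. Concretely, since $R_\infty<H$, there is $U_+$ with $\gamma_r(U_+)>R_\infty$, and there is $U_-$ with $\gamma_l(U_-)=0$. I expect this step to be the main obstacle. If one only knew $\gamma_l(u)\to 0$ without attaining it, one would additionally need $\inf_t r(t)>0$ to get the lower bound. I would state this interpretation explicitly and, if necessary, add the assumption $\inf r>0$ to cover the non-attained case.

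Next comes the upper bound. For $u(t)\ge U_+$, monotonicity of $\gamma_r$ gives $H(u)(t)\ge\gamma_r(u(t))\ge\gamma_r(U_+)>R_\infty\ge r(t)$, hence $u'(t)<0$. Set $B_+:=\max\{u_0,U_+\}$ and suppose $u(t_1)>B_+$ for some $t_1$. Let $s:=\sup\{\tau<t_1: u(\tau)\le B_+\}$. Then $u(s)=B_+$ and $u>B_+\ge U_+$ on $(s,t_1]$, so $u'<0$ there. By the mean value theorem $u(t_1)<u(s)=B_+$, which is a contradiction.

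The lower bound is symmetric. For $u(t)\le U_-$ we have $0\le H(u)(t)\le\gamma_l(u(t))\le\gamma_l(U_-)=0$, so $u'(t)=Kr(t)>0$. The same last-crossing argument with $B_-:=\min\{u_0,U_-\}$ shows $u(t)\ge B_-$ for all $t$. Hence $B_-\le u(t)\le B_+$ for all $t\ge0$. The bounds do not depend on $K$, because $K>0$ only scales $u'$ and never changes its sign.
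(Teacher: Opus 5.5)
Your proof follows the paper's idea: $u'=K(r-H(u))$ is negative once the output exceeds $R_\infty$ and positive once the output is pinned at $0$, so $u$ cannot escape. Your sandwich $\gamma_r(u)\le H(u)\le\gamma_l(u)$, together with the thresholds $U_\pm$ and the last-crossing argument, is a rigorous version of the paper's ``rightmost point with $H(u(t_1))=R_\infty$'' sketch. The saturation hypotheses you make explicit ($\gamma_r(U_+)>R_\infty$, and $\gamma_l(U_-)=0$ or $\inf r>0$) are also used tacitly by the paper, and they are genuinely needed. For instance, $\gamma_l=\gamma_r\equiv 0$ with $r\equiv R_\infty$ satisfies the stated bounds but gives $u(t)=u_0+KR_\infty t$. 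Likewise, $\gamma_l=\gamma_r=e^{u}$ on $u\le 0$ with $r(t)=e^{-t}$, $u_0=-1$, $K=1$ gives $u(t)\to-\infty$.
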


\begin{proof}
The idea of the proof is very intuitive, it comes from the definition of the generalized play operator. Assume by contradiction that the solution is unbounded from above. Then there exists rightmost   point $t_1$ for which we  have $H(u(t_1))=R_{\infty}. $ Then for all $t>t_1 $ we have $H(u(t))>R_{\infty}, $ this means that $\frac{d u }{d t } (t) <0,$ which is a contradiction.

The boundedness from below is proved by the same argument.
\end{proof}

{\emph{Remark:} The assumption that both $H(u(t))$ and $r(t)$ are nonnegative is application-dependent and, in the studied case (cf. Fig. \ref{fig:2}), reflects the only unipolar actuator excitation and corresponding elongation of MSMA. However, this assumption can be easily generalized analogically to the upper bounds.

\vspace{3mm}

Assume now that $r(t) $ is constant. Then we can  prove the asymptotic stability of our ODE \eqref{eq:main}: 

\begin{theorem}
Suppose that Assumption \ref{H} holds and $r(t) = R, $ for $t \geq 0,$ where $0<R< H,$ 
Then the solution $u(t)$ of the Cauchy problem \eqref{eq:main}
with initial conditions $u(0) = u_0$, $H(u)(0) = w_0$,
is stable. Define $u_1$ to be the highest value $u$ for which   $\gamma_l (u) = R$ and  $u_2$ to be the smallest value of $u,$ for which  $\gamma_r (u) = R.$
Then, depending on the initial conditions, the solution converges to  $u_*$
 from the set $\{u_1,u_2\}.$
\end{theorem}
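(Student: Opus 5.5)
Write $w(t)=H(u)(t)$. With $r\equiv R$ the equation reads $\dot u = K\,(R - w(t))$. The plan is to exploit the play structure: $w$ stays constant while $u$ moves strictly between the boundary curves, and it follows $\gamma_l$ or $\gamma_r$ once $u$ touches them. For monotone $u$ the definition \eqref{rv}, extended by continuity, gives
\[
w(t)=\min\{\gamma_l(u(t)),\max\{\gamma_r(u(t)),w(s)\}\}\quad\text{on any interval }[s,t]\text{ where }u\text{ is monotone.}
\]
So the first step is to show that $u$ is monotone for all $t\ge 0$, with the direction fixed by the sign of $R-w_0$. Existence and uniqueness come from the first theorem of this section, and boundedness from the second one; the second theorem applies here because $0<R<H$.

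\emph{Case $w_0<R$.} At $t=0$ we have $\dot u>0$, so $u$ increases. While $u$ increases, $w$ is non-decreasing, since $w=\max\{\gamma_r(u),w_0\}$ up to the $\gamma_l$ cap. The key claim is that $w(t)<R$ for all $t$. Suppose not, and let $\tau$ be the first time with $w(\tau)=R$. On $[0,\tau)$ we have $\dot u>0$. I will then show that $w$ can only reach $R$ asymptotically: on the active branch $w=\gamma_r(u)$, the function $v=u-u_2$ satisfies $\dot v = K(\gamma_r(u_2)-\gamma_r(u))\ge -KL\,|v|$ by Lipschitz continuity of $\gamma_r$ ($u<u_2$, $L$ its constant). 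A Gronwall argument then prevents $u$ from reaching $u_2$ in finite time. Hence $u$ is increasing and bounded, so it converges to some $u_*$. The limit must satisfy $H$-equilibrium $w_\infty=R$, since otherwise $\dot u\ge c>0$ eventually, contradicting boundedness. The value $w_\infty=R$ can only be attained on the $\gamma_r$ branch, and this forces $u_*=u_2$, the smallest $u$ with $\gamma_r(u)=R$, because $u$ approaches from below. If $w_0<R$ but $u_0$ lies strictly inside the hysteresis region, $u$ first travels with $w=w_0$ constant at speed $K(R-w_0)$ until it hits $\gamma_r$. This takes finite time, after which the analysis above applies.

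\emph{Case $w_0>R$.} This is symmetric: $u$ decreases, $w$ follows $\gamma_l$, and $u\to u_1$, the largest $u$ with $\gamma_l(u)=R$. I need to check that "highest"/"smallest" in the statement match the side of approach, and I will note that the roles must be consistent with the ordering $\gamma_r\le\gamma_l$.

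\emph{Case $w_0=R$.} Here $u\equiv u_0$ is an equilibrium; this presumably counts as $u_*$ lying in the equilibrium set. I will remark that the statement should include it, or that then $u_0\in[u_1,u_2]$.

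Stability in the Lyapunov sense then follows from monotonicity: $|u(t)-u_*|\le|u_0-u_*|$. The main obstacle I expect is handling non-strict monotonicity and flat parts of $\gamma_{r}$, $\gamma_l$. Along flat parts, $w$ stays constant below $R$, so $u$ keeps increasing linearly until $\gamma_r$ rises again; this is harmless. The delicate point is the first hitting of the level $R$, where the Lipschitz/Gronwall comparison is needed to exclude overshoot beyond $u_2$.
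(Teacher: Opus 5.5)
Your proposal is correct and follows the paper's own route: the sign of $R-w_0$ fixes the direction in which $u$ moves and this monotonicity persists. It also supplies what the paper's three-line sketch omits, namely the monotone-input formula for $w$, the identification of the limit via $\gamma_r(u_*)=R$ (resp.\ $\gamma_l(u_*)=R$), and the case $w_0=R$, where $u\equiv u_0$ shows the statement needs amending (note $u_0$ need not lie in $[u_1,u_2]$ if $\gamma_l$ or $\gamma_r$ is flat at level $R$). Two small corrections: the inequality you need is $\frac{d}{dt}(u_2-u)\ge -KL\,(u_2-u)$, i.e.\ $\dot v\le KL|v|$, not the trivially true $\dot v\ge -KL|v|$ (overshoot past $u_2$ is anyway excluded by uniqueness, since $(u_2,R)$ is an equilibrium state), and $|u(t)-u_*|\le|u_0-u_*|$ is monotone convergence rather than Lyapunov stability, because $u_*$ depends on the initial state --- an equilibrium $(u_e,R)$ with $\gamma_r(u_e)<R<\gamma_l(u_e)$ is not stable in $u$, since $w_0=R\mp\delta$ sends $u$ to $u_2$ resp.\ $u_1$ --- so what you actually obtain is that $|R-w(t)|$ is nonincreasing and the equilibrium set is stable.
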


\begin{proof}
Following the idea from the previous proof, 
it is clear that as soon as $H(u(t)) >R,$ for some $t$ the solution is decreasing and similarly if  $H(u(t)) <R$ for some $t,$ the solution is increasing.  In both cases  the solution preserves its monotonicity till $H(u(t)) = r$. The statement follows. 
\end{proof}

Assume now that $r(t)$ is not a constant but converges as $t\rightarrow \infty$ to the limit value $R_{\infty}.$

\begin{theorem}
  Suppose that Assumption \ref{H} holds and $\lim_{t\rightarrow \infty}r(t) = R_{\infty}, $  where $0<R_{\infty}< H.$ 
Then the solution $u(t)$ of the Cauchy problem \eqref{eq:main}
with initial conditions $u(0) = u_0$, $H(u)(0) = w_0$,
is  stable. 
Define $u_1$ to be the biggest point where $\gamma_l (u) = R_{\infty}$ and  $u_2$ to be the smallest point where $\gamma_r (u) = R_{\infty}.$
Then the $\omega-$limit set of the solution $u(t)$ of \eqref{eq:main} is the set $\{u_1,u_2\}.$
\end{theorem}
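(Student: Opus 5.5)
We read ``the $\omega$-limit set of the solution'' as the set of all possible limit values of solutions of \eqref{eq:main} when the initial data $(u_0,w_0)$ range over the admissible set $\gamma_r(u_0)\le w_0\le\gamma_l(u_0)$. The plan is to prove two inclusions.
\begin{enumerate}
\item[(a)] Every individual trajectory has its $\omega$-limit set contained in $\{u_1,u_2\}$.
\item[(b)] Each of $u_1$ and $u_2$ is actually attained for suitable initial data.
\end{enumerate}
For a single trajectory, $u$ is continuous and, by the boundedness theorem above, bounded. Its $\omega$-limit set is therefore nonempty, compact and connected. Once (a) is shown, it is a single point of $\{u_1,u_2\}$, so the two-point set can only arise as the union over initial data.

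\emph{Step 1: bounds and monotone branches.} The boundedness theorem applies for $t\ge T_0$, since $r(t)\to R_\infty\in(0,H)$ forces $0<r(t)<H$ eventually. From the definition \eqref{rv} of the generalized play, the output follows $\gamma_r$ when $u$ increases and $\gamma_l$ when $u$ decreases; in between it stays constant. Fix $\varepsilon>0$ small and $T_\varepsilon$ with $|r(t)-R_\infty|<\varepsilon$ for $t\ge T_\varepsilon$. For $t\ge T_\varepsilon$ we then have:
\begin{itemize}
\item $\dot u>0$ whenever $H(u)(t)<R_\infty-\varepsilon$;
\item $\dot u<0$ whenever $H(u)(t)>R_\infty+\varepsilon$.
\end{itemize}
In the first case, $u$ rises along $\gamma_r$ and cannot pass the smallest point $u_2^{+\varepsilon}$ with $\gamma_r(u)=R_\infty+\varepsilon$. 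Symmetrically, in the second case $u$ descends along $\gamma_l$ and cannot pass below the largest point $u_1^{-\varepsilon}$ with $\gamma_l(u)=R_\infty-\varepsilon$.

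\emph{Step 2: trapping.} I will show that after a finite time the trajectory enters one of two neighbourhoods and never leaves it:
\begin{itemize}
\item $U_2^\varepsilon=[u_2^{-\varepsilon},u_2^{+\varepsilon}]$, if its last monotone excursion was upward;
\item $U_1^\varepsilon=[u_1^{-\varepsilon},u_1^{+\varepsilon}]$, if it was downward.
\end{itemize}
The mechanism is the following. Inside the band $|H(u)-R_\infty|\le\varepsilon$ the speed satisfies $|\dot u|\le 2K\varepsilon$. A reversal of direction moves $w$ only along a horizontal segment (constant output) until the opposite boundary curve is reached. Reaching that curve requires $|H-R_\infty|>\varepsilon$, which the dynamics of Step 1 forbid.

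\emph{Step 3: limit $\varepsilon\to0$.} Letting $\varepsilon\to0$ gives $u_2^{\pm\varepsilon}\to u_2$ and $u_1^{\pm\varepsilon}\to u_1$, hence $\omega(u)\subset\{u_1,u_2\}$.

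This is the main obstacle. The extreme level points are only semicontinuous in the level when $\gamma_l$ or $\gamma_r$ is flat at height $R_\infty$, which is exactly the non-strict monotonicity of KP hysteresis. Suppose, for instance, that $\gamma_l\equiv R_\infty$ on $[a,u_1]$. Then $u_1^{-\varepsilon}\le a$, while the trajectory could settle anywhere in $[a,u_1]$, where $\dot u=K(r-R_\infty)$ is integrable or not depending on $r$. I would first try to exclude this by assuming that $\gamma_l,\gamma_r$ cross the level $R_\infty$ strictly, so the level points depend continuously on the level. Without such a condition, the argument only yields $\omega(u)\subset[a,u_1]\cup[u_2,b]$, and that caveat should be recorded.

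\emph{Step 4: both points occur.} For $u_2$, start with $w_0<R_\infty$ and $u_0<u_2$, for instance on the ascending branch $\gamma_r$. Step 1 gives eventual monotone increase along $\gamma_r$, trapped at $u_2$. For $u_1$, take $u_0>u_1$ with $w_0=\gamma_l(u_0)>R_\infty$, which gives descent along $\gamma_l$ to $u_1$. For both choices, pick $r$ close enough to $R_\infty$ from the start (or $T_\varepsilon$ small) that no reversal occurs. Combining (a) and (b), the set of attainable limits is exactly $\{u_1,u_2\}$.
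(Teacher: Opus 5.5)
\emph{The gap is in your claim (a): it is false, so no argument can establish it, and your reinterpretation of the statement does not make it true.} Your remark that a single bounded trajectory has a connected $\omega$-limit set, so the two-point conclusion cannot hold literally, is correct.

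Your argument breaks in the trapping mechanism of Step 2. At a frozen output level $w$ inside the band $(R_\infty-\varepsilon,R_\infty+\varepsilon)$, the horizontal segment of the hysteresis region runs from the $\gamma_l$-branch (near $u_1$) to the $\gamma_r$-branch (near $u_2$). Moving along it, or reaching the opposite curve, never requires $H(u)$ to leave the band. The bound $|\dot u|\le 2K\varepsilon$ controls speed, not the displacement accumulated over an unbounded time interval. For example, if $w$ freezes at $R_\infty+\eta$ with $0<\eta<\varepsilon$ after a reversal on $\gamma_r$, then $\dot u\to-K\eta$, and the trajectory crosses from $U_2^\varepsilon$ into $U_1^\varepsilon$ in finite time. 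Also, a ``last monotone excursion'' need not exist when $r$ oscillates.

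Worse, on the segment $w=R_\infty$, $u\in[u_1,u_2]$, one has $\gamma_r(u)\le R_\infty\le\gamma_l(u)$. So with $w_0=R_\infty$ the output stays frozen and $u(t)=u_0+K\int_0^t(r(s)-R_\infty)\,ds$ as long as $u$ stays in $[u_1,u_2]$. Two consequences follow:
\begin{itemize}
\item For $r\equiv R_\infty$, every $u_0\in[u_1,u_2]$ with $w_0=R_\infty$ is an admissible equilibrium. Under your reading, the set of attainable limits therefore contains the whole interval $[u_1,u_2]$.
\item For $r(t)=R_\infty+\frac{c}{1+t}\cos\ln(1+t)$, with $u_0\in(u_1,u_2)$, $w_0=R_\infty$ and small $c>0$, one gets $u(t)=u_0+Kc\sin\ln(1+t)$. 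Its $\omega$-limit set is the nondegenerate interval $[u_0-Kc,u_0+Kc]$.
\end{itemize}
You identified exactly this drift mechanism in Step 3 for flat pieces of $\gamma_l$. It is in fact generic: the inner segment at level $R_\infty$ behaves like a flat piece whenever the loop has positive width there ($u_1<u_2$).

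What survives is the barrier part of Step 1. For $t\ge T_\varepsilon$, $u\ge u_2^{+\varepsilon}$ forces $H(u)\ge\gamma_r(u)\ge R_\infty+\varepsilon>r$, hence $\dot u<0$; symmetrically, $u\le u_1^{-\varepsilon}$ forces $\dot u>0$. So $u$ eventually stays in $[u_1^{-\varepsilon},u_2^{+\varepsilon}]$. Letting $\varepsilon\to0$ gives $\omega(u)\subset[u_1,u_2]$ when $\gamma_l,\gamma_r$ cross the level $R_\infty$ strictly. Otherwise, replace $u_1$ by the smallest zero of $\gamma_l-R_\infty$ and $u_2$ by the largest zero of $\gamma_r-R_\infty$.

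This is essentially the paper's $\epsilon$--$\delta$ argument. Note that the paper's proof itself only concludes that the $\omega$-limit set cannot lie outside $(u_1-\epsilon,u_2+\epsilon)$ for every $\epsilon>0$, i.e.\ inclusion in the interval, not equality with $\{u_1,u_2\}$. Separately, Step 4 adjusts $r$, which the statement fixes. So even if (a) held, Step 4 would not show that both endpoints are attained for the given input.
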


\begin{proof}
The idea of the  proof is very similar to the previous one, except we need to use some $\epsilon - \delta$ machinery to handle the condition that $\lim_{t\rightarrow \infty}r(t) = R_{\infty}. $
Namely,  take arbitrary   $\epsilon >0$,
and assume that there exist $t_1>0$ such that for all $t>t_1$  \[u(t) >u_2 + \epsilon.\]

Let $\delta = \gamma_r(u(t_1))-\gamma_r(u_2). $ Let us note that consequently $\delta\leq H.$

Then, the assumption $\lim_{t\rightarrow \infty}r(t) = R_{\infty} $ implies that 
there exists some $t_{*}>0,$ such that $r(t)< R_{\infty}+\delta $ for all  $t>t_{*}.$ 

Now take $t \geq \max\{t_{*}, t_1\}.$
We have from the equation that at any such $t$ the solution $u(t) $ must be decreasing.

The case  $u(t) < u(t_1)$ can be handled   analogously.
Therefore the   $\omega-$limit set cannot lie outside the interval $(u_1 - \epsilon, u_2 + \epsilon )$ for any  $\epsilon >0$. This proves the Theorem. 
\end{proof}

Moreover, we can also show that 

\begin{theorem}
Suppose that Assumption \ref{H} holds and \(r(t)\) is \(T\)-periodic and bounded.
Then the solution $u(t)$ of the Cauchy problem \eqref{eq:main}
with initial conditions $u(0) = u_0$, $H(u)(0) = w_0$,
is   \(T\)-periodic and stable. 
\end{theorem}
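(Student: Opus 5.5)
The plan is to compare the solution with its own time shift. Since $r$ is $T$-periodic, the shifted function $v(t):=u(t+T)$ solves \eqref{eq:main} with the same $r$. Its initial data are $v(0)=u(T)$, with memory state $H(u)(T)$. The task then reduces to showing that a suitable distance between $u$ and $v$ is non-increasing, and that in the limit it vanishes along the orbit.

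\textbf{Step 1 (bounds).} I will first invoke the boundedness theorem above, in its two-sided version as in the Remark after it, to get $\sup_{t\ge 0}|u(t)|<\infty$. The solution is well defined and unique by the existence-uniqueness theorem. Together these confine $u$, and hence $H(u)$, to a compact set. On that set the local modulus $m_M$ in \eqref{qw} is governed by the Lipschitz constants of $\gamma_l,\gamma_r$.

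\textbf{Step 2 (monotonicity of the distance).} For two solutions $u_1,u_2$ with outputs $w_k=H(u_k,w_{0k})$, I would use the piecewise monotone structure of the generalized play: on every interval where $u_1-u_2$ keeps its sign, $w_1-w_2$ cannot acquire the opposite sign in a way that increases the mismatch. This is the Hilpert-type inequality for play operators, obtained from \eqref{rv} on piecewise linear inputs and passed to the limit via \eqref{qw}. Combining it with \eqref{eq:main} gives
\begin{equation*}
\frac{d}{dt}\Bigl(|u_1-u_2|+\tfrac1K\,\bigl(\text{memory mismatch}\bigr)\Bigr)\le -K\,(w_1-w_2)\,\mathrm{sign}(u_1-u_2)\le 0 .
\end{equation*}
This yields Lyapunov stability of every solution in the sup-norm, and hence the "stable" part of the statement.

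\textbf{Step 3 (periodicity).} I apply Step 2 with $u_1=u$ and $u_2=v$. The sequence $d_n:=|u(nT)-u((n+1)T)|$ is non-increasing. Moreover, by the monotone-branch arguments of the proof for constant $r$, the sequence $u(nT)$ is monotone and bounded, so it converges to some $u^\ast$. The solution started at $(u^\ast,w^\ast)$ is then a fixed point of the period map and is therefore $T$-periodic. By Step 2, $u$ converges to it.

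\textbf{Main obstacle.} The hard step is upgrading this to the literal claim that $u$ itself is $T$-periodic. That requires $d_0=0$, i.e.\ the initial pair $(u_0,w_0)$ already lies on the periodic orbit. I would try to show that after one period the play has swept the full range of the input on that cycle, so its memory is wiped out and the state is reproduced. However, I expect this to hold only eventually, after a transient, or in the asymptotic sense just described, unless the initial data are chosen compatibly with the periodic orbit.
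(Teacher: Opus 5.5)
The decisive gap is the last inequality in Step 2, $-K(w_1-w_2)\,\mathrm{sign}(u_1-u_2)\le 0$. It asserts that $H$ is pointwise monotone across \emph{different memory states}, and a generalized play is not. Take $r\equiv R$, which is $T$-periodic. Let $a$ be the largest point with $\gamma_l(a)=R$ and $b$ the smallest with $\gamma_r(b)=R$, and assume $a<b$ (a genuine loop, e.g.\ the classical play), so $\gamma_r<R<\gamma_l$ on $(a,b)$. Start both solutions at $u_0\in(a,b)$ with $w_{01}=R$ and $w_{02}=R+\eta$, $\eta>0$ small.
\begin{itemize}
\item Then $u_1\equiv u_0$.
\item $w_2$ stays frozen at $R+\eta$, and $u_2$ decreases at speed $K\eta$ until it meets $\gamma_l$.
\item After that, $\dot u_2=K(R-\gamma_l(u_2))<0$ drives $u_2\downarrow a$.
\end{itemize}
So $u_1>u_2$ but $w_1<w_2$ for all $t>0$, and $|u_1(t)-u_2(t)|\to u_0-a$ however small $\eta$ is. Hence no functional $|u_1-u_2|+(\text{memory mismatch})$ that is small for nearby initial states can be non-increasing. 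The sup-norm Lyapunov stability you derive from it is therefore false.

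The paper's proof rests on the same sign claim, $\frac12\frac{d}{dt}(u_1-u_2)^2=-(H(u_1)-H(u_2))(u_1-u_2)\le 0$ ``by the monotonicity of $H$''. It applies this to a Poincar\'e map $P:\mathbb{R}\to\mathbb{R}$ acting on $u$ alone, then uses Brouwer, and gets uniqueness ``from Lipschitz continuity''. That is a different route from yours: a fixed point of a nonexpansive map rather than a time-shift/monotone-sequence argument. But it cannot repair Step 2. Uniqueness fails too: with $r=R+\epsilon\sin(2\pi t/T)$ and $w_0=R$, every $u_0\in(a,b)$ with $u_0+K\epsilon T/\pi<b$ yields the $T$-periodic solution $u(t)=u_0+\frac{K\epsilon T}{2\pi}\bigl(1-\cos\frac{2\pi t}{T}\bigr)$, $w\equiv R$.

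Step 3 has independent gaps. Monotonicity of $u(nT)$ does not follow from the constant-$r$ argument, which relies on $r-H(u)$ keeping its sign; with oscillating $r$ that sign can change within each period. The loop is also a negative feedback (larger $u$ raises $w$, larger $w$ lowers $\dot u$), so an order-preservation argument will not supply it either. Moreover, $w(nT)$ must converge before $(u^\ast,w^\ast)$ is even defined.

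Your closing remark is correct: the literal claim that the given solution is $T$-periodic is false, since the solution $u_2$ above is strictly decreasing. The paper's proof does not establish it either; it only argues about \emph{a} periodic solution. Note also that ``bounded'' does not even give existence: if $\gamma_l$ is bounded and $r\equiv R>\sup\gamma_l$, then $\dot u\ge K(R-\sup\gamma_l)>0$.

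What can be salvaged is the Brouwer step of the paper's proof, applied to the period map on pairs $(u,w)$. Assume a range condition such as $\gamma_l(-M)<\min r$ and $\max r<\gamma_r(M)$. Then the set $S=\{(u,w):|u|\le M,\ \gamma_r(u)\le w\le\gamma_l(u)\}$ is forward invariant and a retract of a rectangle. The period map is continuous on $S$ by \eqref{qw} and Gronwall, so some $T$-periodic solution exists. Uniqueness, and stability of an arbitrary solution, cannot be proved, because they fail.
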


\begin{proof}
We prove  existence of a periodic solution using the Poincaré map: 

Define the \emph{Poincaré map} \(P : \mathbb{R} \to \mathbb{R}\) in a standard way, 
a fixed point \(u_*\) of \(P\) then corresponds  to a \(T\)-periodic solution of our  equation \eqref{eq:main}.

Let \(u_1(t)\) and \(u_2(t)\) be two solutions of the \eqref{eq:main} with the same \(r(t)\), but with different initial conditions \(u_1(0)\) and \(u_2(0)\).  
Define
\[
w(t) := u_1(t) - u_2(t).
\]
Then
\[
\frac{dw}{dt} = -\big(H(u_1(t)) - H(u_2(t))\big).
\]
Multiplying the last equation by \(w(t)\) gives
\[
\frac12 \frac{d}{dt} (w^2(t)) = -\big(H(u_1(t)) - H(u_2(t))\big) \, w(t) \le 0
\]
by the monotonicity of \(H\).  Therefore,
\[
|w(t)| \ \text{is nonincreasing in } t.
\]
In particular,
\[
|P(u_1(0)) - P(u_2(0))| \le |u_1(0) - u_2(0)|.
\]

Boundedness of solutions (can be shown as before, see Theorem 2.2) ensures that \(P\) maps a compact convex subset of \(\mathbb{R}\) into itself.  
By the Brouwer fixed-point theorem, \(P\) has at least one fixed point, hence \eqref{eq:main} has at least one \(T\)-periodic solution \(u_p(t)\).

The Lipschitz continuity  of \(H\) implies that  the fixed point is \emph{unique}.

Let \(u(t)\) be any solution  and \(u_p(t)\) be the unique periodic solution with initial value \(u(0) = u_0\).
By repeating the arguments as above, it can be shown that the difference \(v(t) := u(t) - u_p(t)\) satisfies
\[
\frac12 \frac{d}{dt} (v^2(t)) \le 0,
\]
so \(|v(t)|\) is nonincreasing over time. 
This implies stability.
\end{proof}




\begin{theorem}{Error bound of $e=r-H(u)$.}

Suppose that Assumption \ref{H} holds. Assume, moreover that the hysteresis term $H(u)$ as well as the function  $r(t)$ are nonnegative and bounded from above, i.e. that there exist constants $H>0 $ and $R_{\infty}$ such that for any $t \geq 0$, $0\leq H(u(t))\leq H$ and moreover 
$0<r(t)\leq R_{\infty} <H$.\\
Then we have  
\[
|e(t)| \;\le\; R_{\infty} + H,
\qquad \forall t \ge 0.
\]
\end{theorem}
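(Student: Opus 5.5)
The bound follows from the triangle inequality and the two one-sided bounds given in the hypotheses. No dynamical information from \eqref{eq:main} is needed beyond knowing that a solution exists on $[0,\infty)$.

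\emph{Step 1: global existence.} By the existence and uniqueness theorem above, the Cauchy problem \eqref{eq:main} with $u(0)=u_0$, $H(u)(0)=w_0$ has a unique solution $u(t)$. By the boundedness theorem, this solution is bounded for all $t\ge 0$. Hence $H(u(t))$ is defined and continuous on all of $[0,\infty)$, and so is $e(t)=r(t)-H(u(t))$.

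\emph{Step 2: the estimate.} For every $t\ge 0$ the triangle inequality gives
\[
|e(t)| = |r(t)-H(u(t))| \le |r(t)| + |H(u(t))| .
\]
Since $0<r(t)\le R_\infty$, we have $|r(t)|=r(t)\le R_\infty$. Since $0\le H(u(t))\le H$, we have $|H(u(t))|\le H$. Adding the two gives $|e(t)|\le R_\infty+H$ for all $t\ge 0$.

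\emph{Sharper form.} Because both quantities are nonnegative, one even gets $-H\le e(t)\le R_\infty$, so $|e(t)|\le \max\{R_\infty,H\}=H$. This is a stronger bound, and it implies the stated one.

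\emph{Where the difficulty lies.} The only point needing care is that $0\le H(u(t))\le H$ is taken as a hypothesis along the actual trajectory. If one instead wanted to derive it from the operator, I would use the definition \eqref{rv}: $w(t)$ is clamped between $\gamma_r(u(t))$ and $\gamma_l(u(t))$. The bounds would then follow from the boundedness of $u$ (previous theorem) together with the monotonicity and continuity of $\gamma_l$ and $\gamma_r$, which give bounds on their values over the bounded range of $u$.
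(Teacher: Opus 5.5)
Your proof is correct, but it takes a more direct route than the paper's. The paper never uses the hypothesis $0\le H(u(t))\le H$ directly. Instead it argues in three steps:
\begin{itemize}
\item it invokes the clamping property of the generalized play, namely that $H(u)(t)$ lies between $\gamma_r(u(t))$ and $\gamma_l(u(t))$ (the paper writes this interval with its endpoints swapped, although $\gamma_r\le\gamma_l$);
\item since $w\mapsto|r(t)-w|$ is convex, it bounds $|e(t)|$ by $\max\{|r(t)-\gamma_l(u(t))|,\,|r(t)-\gamma_r(u(t))|\}$;
\item it then appeals to ``the bounds from the Assumption.''
\end{itemize}
Closing that last step actually requires the boundary curves themselves to satisfy something like $0\le\gamma_r(u(t))\le\gamma_l(u(t))\le H$. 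That is a property of $\gamma_l,\gamma_r$, not of the output $H(u(t))$, so it is an implicit extra assumption (a natural one for saturated KP-type curves).

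Your triangle-inequality argument uses exactly the stated hypotheses and needs no structure of the operator. It even yields the sharper bound $-H<e(t)\le R_\infty$, i.e.\ $|e(t)|\le H$. What the paper's route buys in principle is this: given global bounds on $\gamma_l,\gamma_r$, the estimate would follow from the operator alone rather than from a hypothesis imposed along the trajectory. It also indicates how knowledge of the envelope curves could sharpen the bound.

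Two side remarks, neither of which affects your proof:
\begin{itemize}
\item Your Step 1 is unnecessary. The statement concerns the solution already provided by the existence theorem, and its global existence rests on the Lipschitz continuity of $H$, not on the boundedness theorem.
\item Your closing suggestion to derive $0\le H(u(t))\le H$ from boundedness of $u$ would be circular, because the boundedness theorem takes that very bound as a hypothesis.
\end{itemize}
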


\begin{proof}
By definition of the generalized play operator,
for each $t \ge 0$ the output satisfies
\[
H(u)(t) \in [\gamma_\ell(u(t)),\, \gamma_r(u(t))].
\]
Therefore
\[
e(t) = -H(u)(t) + r(t) \;\in\; r(t) - [\gamma_\ell(u(t)), \gamma_r(u(t))],
\]
i.e.
\[
|e(t)| \;\le\; \max\big\{ |r(t) - \gamma_\ell(u(t))|,\; |r(t) - \gamma_r(u(t))| \big\}.
\]
Consequently using the bounds from the Assumption we get the desired estimate.
Let us note that this estimate is independent of both the gain $K$ and the trajectory $u(t)$.
\end{proof}

{\emph{Remark:} The above estimate of the error bound is the conservative one. Depending on the gain parameter value $K$ and the Lipschitz constant of the input trajectory 
the error bound can be lower to a certain extent (cf. numerical examples below). 

\section{Numerical Cases}
\label{Numeric}

We consider the problem \eqref{eq:main} and perform several numerical cases to illustrate the analysis results derived above. The implemented model \eqref{eq:main} uses a standard forward Euler numerical solver with the fixed time step assigned to $10^{-6}$ sec. The KP-type hysteresis $H(u)$ is the one identified from the experimental data, see Figure \ref{fig:1}, and its input-output implementation uses three saturated elementary Prandtl-Ishlinskii play-type operators, see e.g. \cite{9} for basics, connected in parallel. Note that in the following, for the sake of simplicity, the numerical values for $r$ and $H(u)$ are used without $\times 10^{-4}$ scaling, cf. Figure \ref{fig:1}. Two gain factor values $K = \{10, \, 50\}$ are assigned to better distinguish the resulting convergence behavior. We consider the control error
\[
e(t) = r(t) - H\bigl(u(t)\bigr) 
\]
as a convergence factor of interest. Recall that the $|e(t)|$ norm serves as the main performance metric of the feed-forward hysteresis control, cf. Section \ref{INTRO}. 

First, a constant input value $r(t) = R = 2$ is applied at time $t=0.1$ sec. The control error and the logarithm of its amplitude are shown in Figure \ref{fig:3} (a) and (b), respectively. The convergence is clearly inline with Theorems 3.4, while the exponential convergence rate depends on the gaining factor $K$.
\begin{figure}[!ht]
\centering
\includegraphics[scale=0.65]{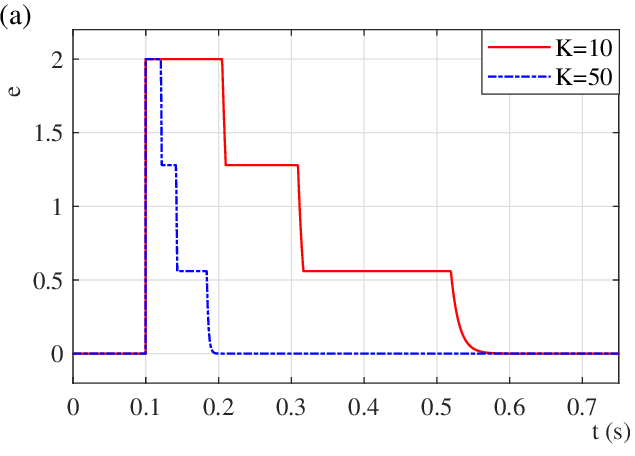}
\includegraphics[scale=0.65]{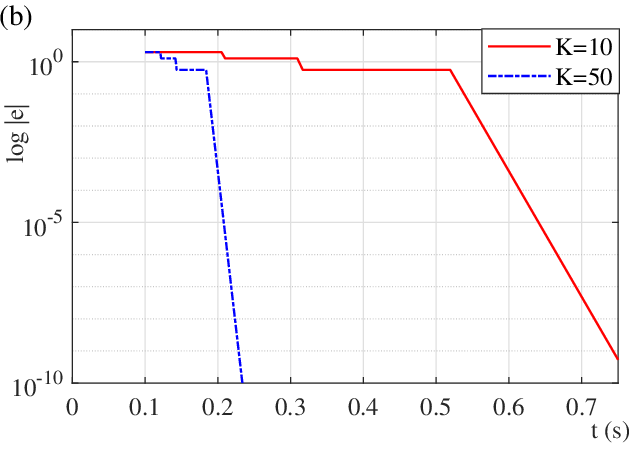}
\caption{Control error (a) and its logarithmic convergence (b) for a constant (step) input value $r(t) = R = 2$ applied at time $t=0.1$ sec.}
\label{fig:3}       
\end{figure} 

Next, a smooth input function 
\begin{equation*}
r(t) = \frac{a_1 t^n}{h^n + t^n} + \frac{a_2}{\sqrt{2 \pi \sigma^2}} \exp \biggl(-\frac{(t-\mu)^2}{2 \sigma^2} \biggr)\, \sin(\omega t), 
\end{equation*}
which satisfies $\lim_{t \rightarrow \infty} r(t) = R_{\infty} = a_1$, cf. the $r$-condition in Theorem 3.5, is used. The parameter values are assigned as $n=4$, $h=0.2$, $a_1 = 2$, $a_2 = 0.1$, $\sigma = 0.1$, $\mu = 0.3$, $\omega = 100$, resulting in the $r(t)$ trajectory as shown in Figure \ref{fig:4} (a). The corresponding control error, which is apparently converging to zero, is shown for both gaining factors $K$ in Figure \ref{fig:4} (b), in accord with Theorem 3.5. 
\begin{figure}[!ht]
\centering
\includegraphics[scale=0.65]{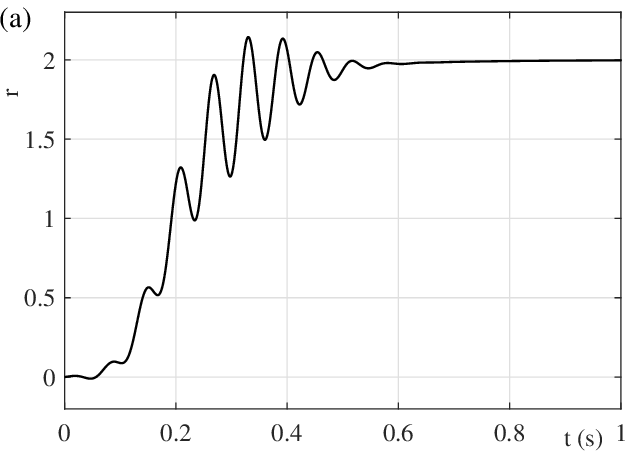}
\includegraphics[scale=0.65]{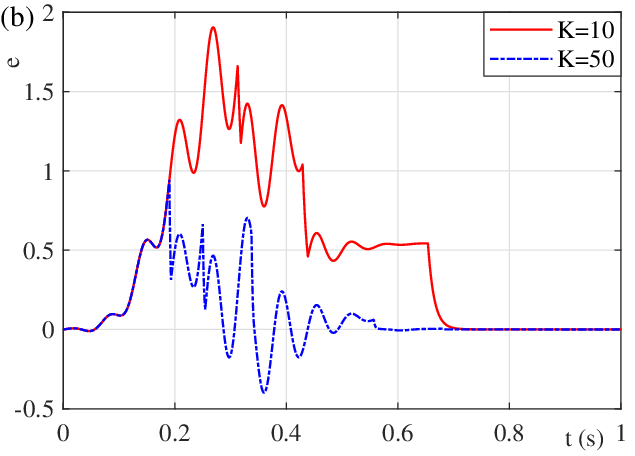}
\caption{Input trajectory (a) and control error (b) in case $\lim_{t \rightarrow \infty} r(t) = 2$.}
\label{fig:4}       
\end{figure} 

Finally, we apply a periodic input function satisfying $0 < r(t) < H$, cf. Theorem 3.6. The assumed  function 
\begin{equation*}
r(t) = A_0 + A \sin \bigl(\omega t +  \phi \bigr) 
\end{equation*}
is smooth for all $t > 0$, while its dynamics rate and so the Lipschitz constant are directly controllable by the angular frequency $\omega$. The parameters $A_0 = 1.1$, $A = 1$, and $\phi = -\pi/2$ are assumed with respect to the parameterized hysteresis $H(u)$, cf. Figure \ref{fig:1}.
\begin{figure}[!ht]
\centering
\includegraphics[scale=0.72]{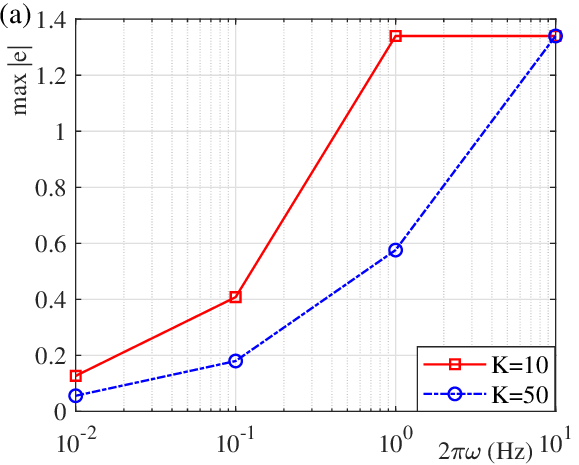}
\includegraphics[scale=0.7]{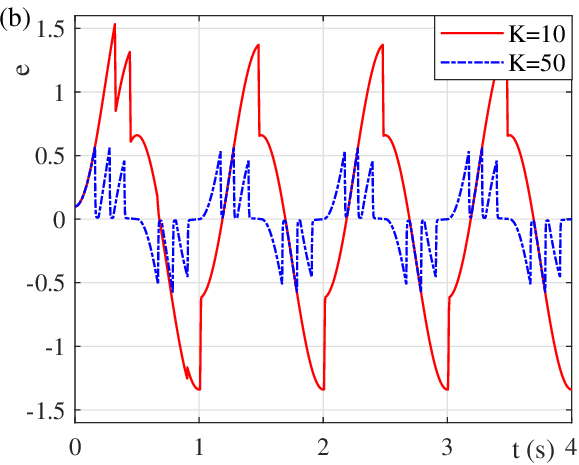}
\caption{Maximal steady-state absolute control error depending on the frequency $2 \pi \omega$ (a), and control error for $2 \pi \omega = 1$ Hz (b).}
\label{fig:5}       
\end{figure} 
The steady-state absolute control error $\max |e|_{t \rightarrow \infty}$ is evaluated for both gaining factors $K = \{10, 50 \}$. Also the set of angular frequencies $\omega = (2\pi)^{-1} \times \{   0.01,\, 0.1,\, 1,\, 10\}$ rad/sec is used for different simulations, this way covering three decades on the logarithmic scale, see Figure \ref{fig:5} (a). Note that the input frequency is depicted logarithmical in the Hz units,  for the sake of convenience. The control error itself over the time is also shown exemplary in Figure \ref{fig:5} (b) for the angular frequency $\omega = (2\pi)^{-1}$ rad/sec. We notice that the obtained numerical results depicted in Figure \ref{fig:5} (a) are qualitatively in accord with the previous approximative analysis based on the linear transfer functions provided in \cite{2}, cf. with \cite[Fig.~7]{2}.      

\color{black}

\section{Conclusions}
\label{concl}

We considered the non-homogeneous first-order differential equation with hysteresis, described by the Krasnosel\'skii-Pokrovskii rate-independent hysteresis operator. More specifically, we studied the boundedness of solution in response to a bounded input and the global stability of such equation that describes the so-called inversion-free feedforward hysteresis control \cite{2}. The problem was motivated by experimentally identified hysteresis of a magnetic shape memory alloy (MSMA) actuator, where non-smoothness and non-strict monotonicity of the hysteresis map constitute the essential concerns addressed in this work. In a series of theorems, we
analyzed the existence, uniqueness, and boundedness of  solutions of the  dynamical system with hysteresis, and the stability of the solution as well the existence, uniqueness and stability of solutions with  periodic inputs were considered. Finally, we presented several illustrative numerical cases in line with the developed analysis. A less conservative estimate of the error bound for periodic inputs, as well as its dependence on the gain parameter and the Lipschitz constant of the input trajectory, could be the subject of future investigations, especially with regard to control applications. 
 
\color{black}

\bibliographystyle{elsarticle-harv}
\bibliography{references}

\begin{thebibliography}{10}
\expandafter\ifx\csname natexlab\endcsname\relax\def\natexlab#1{#1}\fi
\providecommand{\url}[1]{\texttt{#1}}
\providecommand{\href}[2]{#2}
\providecommand{\path}[1]{#1}
\providecommand{\DOIprefix}{doi:}
\providecommand{\ArXivprefix}{arXiv:}
\providecommand{\URLprefix}{URL: }
\providecommand{\Pubmedprefix}{pmid:}
\providecommand{\doi}[1]{\href{http://dx.doi.org/#1}{\path{#1}}}
\providecommand{\Pubmed}[1]{\href{pmid:#1}{\path{#1}}}
\providecommand{\bibinfo}[2]{#2}
\ifx\xfnm\relax \def\xfnm[#1]{\unskip,\space#1}\fi
\bibitem[{Brokate and Sprekels(1996)}]{BrokateSprekels}
\bibinfo{author}{Brokate, M.}, \bibinfo{author}{Sprekels, J.},
  \bibinfo{year}{1996}.
\newblock \bibinfo{title}{Hysteresis and Phase Transitions}.
\newblock \bibinfo{publisher}{Springer}.
\bibitem[{Kopfová(1999)}]{3}
\bibinfo{author}{Kopfová, J.}, \bibinfo{year}{1999}.
\newblock \bibinfo{title}{Uniqueness theorem for a {Cauchy} problem with
  hysteresis}.
\newblock \bibinfo{journal}{Proc. of American Mathematical Society}
  \bibinfo{volume}{127}, \bibinfo{pages}{3527--3532}.
\bibitem[{Krasnosel’skii and Pokrovskii(1989)}]{1}
\bibinfo{author}{Krasnosel’skii, M.}, \bibinfo{author}{Pokrovskii, A.},
  \bibinfo{year}{1989}.
\newblock \bibinfo{title}{Systems with hysteresis}.
\newblock \bibinfo{publisher}{Springer}.
\bibitem[{Krejci(1996)}]{9}
\bibinfo{author}{Krejci, P.}, \bibinfo{year}{1996}.
\newblock \bibinfo{title}{Hysteresis, convexity and dissipation in hyperbolic
  equations}.
\newblock \bibinfo{publisher}{Tokyo: Gakkotosho}.
\bibitem[{Mayergoyz(1991)}]{Mayergoyz}
\bibinfo{author}{Mayergoyz, I.}, \bibinfo{year}{1991}.
\newblock \bibinfo{title}{Mathematical Models of Hysteresis}.
\newblock \bibinfo{publisher}{Springer}.
\bibitem[{Minorowicz et~al.(2016)Minorowicz, Leonetti, Stefanski, Binetti and
  Naso}]{7}
\bibinfo{author}{Minorowicz, B.}, \bibinfo{author}{Leonetti, G.},
  \bibinfo{author}{Stefanski, F.}, \bibinfo{author}{Binetti, G.},
  \bibinfo{author}{Naso, D.}, \bibinfo{year}{2016}.
\newblock \bibinfo{title}{Design, modelling and control of a micro-positioning
  actuator based on magnetic shape memory alloys}.
\newblock \bibinfo{journal}{Smart Materials and Structures}
  \bibinfo{volume}{25}, \bibinfo{pages}{075005}.
\bibitem[{Ruderman(2023)}]{2}
\bibinfo{author}{Ruderman, M.}, \bibinfo{year}{2023}.
\newblock \bibinfo{title}{Inversion-free feedforward hysteresis control using
  {Preisach} model}, in: \bibinfo{booktitle}{IEEE European Control Conference},
  pp. \bibinfo{pages}{1--6}.
\bibitem[{Ruderman and Bertram(2014)}]{5}
\bibinfo{author}{Ruderman, M.}, \bibinfo{author}{Bertram, T.},
  \bibinfo{year}{2014}.
\newblock \bibinfo{title}{Control of magnetic shape memory actuators using
  observer-based inverse hysteresis approach}.
\newblock \bibinfo{journal}{IEEE Transactions on Control Systems Technology}
  \bibinfo{volume}{22}, \bibinfo{pages}{1181--1189}.
\bibitem[{Ruderman et~al.(2025)Ruderman, Giostra and Sette}]{6}
\bibinfo{author}{Ruderman, M.}, \bibinfo{author}{Giostra, G.},
  \bibinfo{author}{Sette, M.}, \bibinfo{year}{2025}.
\newblock \bibinfo{title}{Inversion-free feed-forward and feedback control of
  {MSM} based actuator with large non-smooth input hysteresis}.
\newblock \bibinfo{journal}{IFAC-PapersOnLine: ALCOS2025} .
\bibitem[{Visintin(1994)}]{8}
\bibinfo{author}{Visintin, A.}, \bibinfo{year}{1994}.
\newblock \bibinfo{title}{Differential models of hysteresis} .

\end{thebibliography}

\end{document}